\documentclass[psamsfonts]{amsart}

 \makeatletter
       \def\@makefnmark{%
               \leavevmode
               \raise.9ex\hbox{\check@mathfonts
                       \fontsize\sf@size\z@\normalfont%
                               \@thefnmark}%
       }
       \makeatother
       
\usepackage{amsmath,amssymb,mathrsfs}
\usepackage[dvipdfmx,usenames]{color}
\usepackage[all]{xy}
\usepackage[dvipdfmx]{graphicx}
\usepackage{wrapfig}
\usepackage{tabularx}
\usepackage{supertabular}
\usepackage{setspace}
\usepackage{xypic}
\usepackage{enumerate} 

\newtheorem{dfn}{Definition}[section]
\newtheorem{thm}[dfn]{Theorem}
\newtheorem{lem}[dfn]{Lemma}

\newtheorem{rmk}[dfn]{Remark}

\newcommand{\ind}{\mathrm{index}}

\title{A note on Atiyah's $\Gamma$-index theorem in Heisenberg calculus}
\author{Tatsuki SETO}

\address{Graduate School of Mathematics, Nagoya University, Furocho, Chikusaku, Nagoya, Japan}
\email{m11034y@math.nagoya-u.ac.jp}
\subjclass[2000]{Primary 19K56; Secondary 46L87.}
\keywords{Index theory, Heisenberg calculus, Galois covering, 
	Heisenberg structure, contact structure}

\begin{document}

\begin{abstract}
In this note, 
we prove an index theorem on Galois coverings 
for Heisenberg elliptic differential operators, 
but not elliptic, 
which is analogous to Atiyah's $\Gamma$-index theorem. 
This note also contains 
an example of Heisenberg differential operators with a non-trivial $\Gamma$-index. 
\end{abstract}

\maketitle

\section*{Introduction}

M. F. Atiyah \cite{MR0420729} 
introduced the notion of the $\Gamma$-index $\ind_{\Gamma}(\widetilde{D})$ 
for a lifted elliptic differential operator $\widetilde{D}$ 
on a Galois $\Gamma$-covering over a closed manifold 
and proved that the $\Gamma$-index $\ind_{\Gamma}(\widetilde{D})$ 
of a lifted operator $\widetilde{D}$ 
equals the Fredholm index $\ind (D)$ of the elliptic differential operator $D$ 
on the base manifold. 
On the other hand, Atiyah \cite{MR0420729} also 
investigate properties of a $\Gamma$-trace $\mathrm{tr}_{\Gamma}$  at the same time. 
The $\Gamma$-trace is a trace of the $\Gamma$-trace operators, 
so it induces a homomorphism $(\mathrm{tr}_{\Gamma})_{\ast}$ 
from 
$K_{0}$-group of the $\Gamma$-compact operators to the real numbers. 
Out of a lifted elliptic differential operator $\widetilde{D}$, 
we can define the $\Gamma$-index class $\mathrm{Ind}_{\Gamma}(\widetilde{D})$ 
by using the Connes-Skandalis idempotent \cite[II.9.$\alpha$ (p.131)]{MR1303779} 
and send it by the induced homomorphism $(\mathrm{tr}_{\Gamma})_{\ast}$, 
then the image $(\mathrm{tr}_{\Gamma})_{\ast}(\mathrm{Ind}_{\Gamma}(\widetilde{D}))$ 
equals the $\Gamma$-index $\ind_{\Gamma}(\widetilde{D})$ 
and thus the Fredholm index $\ind (D)$ of 
the elliptic differential operator $D$ on the base manifold. 

On the other hand, 
there is another pseudo-differential calculus 
on Heisenberg manifolds which is called 
the Heisenberg calculus; see, for instance \cite{MR2417549}. 
Roughly speaking, Heisenberg calculus is 
``weighted'' calculus and the 
product of the ``Heisenberg principal symbols'' are defined by convolution product. 
When the Heisenberg principal symbol of $P$ 
is invertible, we call $P$ a Heisenberg elliptic operator. 
Note that any Heisenberg elliptic operator is not elliptic. 
For a Heisenberg elliptic operator $P$, 
we can construct a parametrix by using its inverse, 
so $P$ is a Fredholm operator if the base manifold is closed. 
Thus the Fredholm index of $P$ on a closed manifold 
is well defined, but 
a solution of an index problem of $P$ 
does not obtained in general. 
However, 
index problems for Heisenberg elliptic operators 
on contact manifolds or foliated manifolds 
are solved by E. van Erp and P. F. Baum; 
see \cite{MR3261009}, \cite{MR2680395}, \cite{MR2680396}, \cite{MR2746652}. 

In this note, 
we study that we can define 
the $\Gamma$-index $\ind_{\Gamma}(\widetilde{P})$ 
and the $\Gamma$-index class $\mathrm{Ind}_{\Gamma}(\widetilde{P})$ 
for a lifted Heisenberg elliptic differential operator $\widetilde{P}$ 
by using a parametrix. 
Once these ingredients are defined, 
the proof of the matching of three ingredients 
$\ind_{\Gamma}(\widetilde{P})$, 
$(\mathrm{tr}_{\Gamma})_{\ast}(\mathrm{Ind}_{\Gamma}(\widetilde{P}))$ 
and $\ind (P)$, 
is straightforward; see subsection \ref{sub:main}.  
We also investigate an example 
of Heisenberg differential operators 
on a contact manifold
with non-trivial $\Gamma$-index 
by using the index formula in \cite{MR3261009}; see subsection \ref{sub:exm}. 


\section{Short review of Atiyah's $\Gamma$-index theorem}
\label{sec:atiyah}

In this section, 
we recall Atiyah's $\Gamma$-index theorem in 
ordinary pseudo-differential calculus. 
The main reference of this section is 
Atiyah's paper \cite{MR0420729}. 
Let $\widetilde{M} \to M$ be a Galois covering 
with a deck transformation group $\Gamma$ 
over a closed manifold $M$ with a smooth measure $\mu$
and $D : C^{\infty}(E) \to C^{\infty}(F)$ 
an elliptic differential operator 
on Hermitian vector bundles $E,F \to M$. 
We lift these ingredients on $\widetilde{M}$ 
and denote by 
$\widetilde{D} : C^{\infty}(\widetilde{E}) \to C^{\infty}(\widetilde{F})$ 
and $\tilde{\mu}$. 
Let $\mathrm{Ker}_{L^{2}}(\widetilde{D})$ (resp. $\mathrm{Ker}_{L^{2}}(\widetilde{D}^{\ast})$) 
be the  $L^{2}$-solutions of $\widetilde{D}u = 0$ (resp. $\widetilde{D}^{\ast}u = 0$) 
and 
denote by $\Pi_{0}$ (resp. $\Pi_{1}$) 
the orthogonal projection on a 
closed subspace 
$\mathrm{Ker}_{L^{2}}(\widetilde{D})$ (resp. $\mathrm{Ker}_{L^{2}}(\widetilde{D}^{\ast})$) 
of the $L^{2}$-sections. 

A $\Gamma$-invariant bounded operator $T$ on the 
$L^{2}$-sections $L^{2}(\widetilde{E})$ 
of $\widetilde{E}$ 
is of \textit{$\Gamma$-trace class} 
if $\phi T \psi \in L^{2}(\widetilde{E})$ 
is of trace class for any 
compactly suppoted smooth functions $\phi , \psi$ 
on $\widetilde{M}$.  
Denote by $\mathcal{L}^{1}_{\Gamma}$ 
the set of $\Gamma$-trace class operators 
and $\mathrm{tr}_{\Gamma}(T)$ the \textit{$\Gamma$-trace} 
of a $\Gamma$-trace class operator $T$ defined by 
\[
\mathrm{tr}_{\Gamma}(T) = \textrm{Tr}(\phi T \psi) \in \mathbb{C}. 
\]
Here, the right hand side is the trace of 
a trace class operator $\phi T \psi$ 
and this quantity does not depend on 
the choice of functions $\phi , \psi$. 
By using ellipticity of $\widetilde{D}$, 
operators $\phi \Pi_{0} \psi$ and $\phi \Pi_{1} \psi$ 
are smoothing operators on compact sets. 
Thus $\Pi_{0}$ and $\Pi_{1}$ are of $\Gamma$-trace class  
and thus one obtains the $\Gamma$-index of $\widetilde{D}$: 
\[
\ind_{\Gamma}(\widetilde{D}) 
	= \mathrm{tr}_{\Gamma}(\Pi_{0}) - \mathrm{tr}_{\Gamma}(\Pi_{0}) \in \mathbb{R}. 
\]

In the context of the $\Gamma$-index theorem, 
the most important class of $\Gamma$-trace class operators 
is the lifts of almost local smoothing operators on $M$. 
Let $S$ be an almost local smoothing operator with 
a smooth kernel $k_{S}$ 
and $\widetilde{S}$ a lift of $S$. 
Then $\widetilde{S}$ is of $\Gamma$-trace class 
and its $\Gamma$-trace is calculated by the following: 
\begin{equation}
\label{eq:gammatr}
\mathrm{tr}_{\Gamma}(\widetilde{S}) 
	= \int_{M}\mathrm{tr}(k_{S}(x,x)) d\mu 
	= \mathrm{Tr}(S).  \tag{$\ast$}
\end{equation}

Denote by $\mathcal{K}_{\Gamma}$ 
the $C^{\ast}$-closure of $\mathcal{L}^{1}_{\Gamma}$ 
and $K_{0}(\mathcal{K}_{\Gamma})$ 
the analytic $K_{0}$-group. 
Then $\mathrm{tr}_{\Gamma}$ induces 
a homomorphism of abelian groups by substitution: 
\[
(\mathrm{tr}_{\Gamma})_{\ast} 
	: K_{0}(\mathcal{K}_{\Gamma}) \to \mathbb{R}. 
\]

On the other hand, 
since $D$ is an elliptic differential operator, 
there exist an almost local parametrix $Q$ and 
almost local smoothing operators $S_{0},S_{1}$ 
such that one has 
$QD = 1 - S_{0}$ and $DQ = 1 - S_{1}$. 
Denote  by 
$\widetilde{Q}$, $\widetilde{S_{0}}$ and $\widetilde{S_{1}}$ 
lifts of these operators 
and then one has same relations 
$\widetilde{Q}\widetilde{D} = 1 - \widetilde{S_{0}}$ 
and $\widetilde{D}\widetilde{Q} = 1 - \widetilde{S_{1}}$. 
Set 
\[ 
e_{\widetilde{D}} = 
\begin{bmatrix}
\widetilde{S_{0}}^{2} & 
	\widetilde{S_{0}}(1 + \widetilde{S_{0}}) \widetilde{Q} \\
\widetilde{S_{1}}\widetilde{D} & 1-\widetilde{S_{1}}^{2}
\end{bmatrix} \quad \text{and} \quad 
e_{1} = 
\begin{bmatrix}
0 & 0 \\ 
0 & 1
\end{bmatrix}. 
\]
By $\widetilde{Q}\widetilde{S_{1}} = \widetilde{S_{0}}\widetilde{Q}$ 
and $\widetilde{S_{1}}\widetilde{D} = \widetilde{D}\widetilde{S_{0}}$, 
one has $e_{\widetilde{D}}^{2} = e_{\widetilde{D}}$, that is, 
$e_{\widetilde{D}}$ is an idempotent. 
Note that this idempotent $e_{\widetilde{D}}$ 
is called the Connes-Skandalis idempotent; 
see, for instance \cite[II.9.$\alpha$ (p.131)]{MR1303779}. 
Moreover, a difference $e_{\widetilde{D}} - e_{1}$ 
is of $\Gamma$-trace class. 
Hence 
we can define a $\Gamma$-index class 
\[
\mathrm{Ind}_{\Gamma}(\widetilde{D}) 
	= [e_{\widetilde{D}}] - [e_{1}] \in K_{0}(\mathcal{K}_{\Gamma}). 
\]
By the definition of a map $(\mathrm{tr}_{\Gamma})_{\ast}$ 
and Atiyah's paper, one has the following: 

\begin{thm}[Atiyah's $\Gamma$-index theorem {\cite[Theorem 3.8]{MR0420729}}]
In the above settings, we have the following equality: 
\[ 
\ind_{\Gamma}(\widetilde{D}) 
	= (\mathrm{tr}_{\Gamma})_{\ast}(\mathrm{Ind}_{\Gamma}(\widetilde{D}))
	= \ind (D) \in \mathbb{Z}. 
\]
\end{thm}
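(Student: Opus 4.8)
The plan is to prove the two displayed equalities in turn: that $(\mathrm{tr}_{\Gamma})_{\ast}(\mathrm{Ind}_{\Gamma}(\widetilde{D})) = \ind(D)$, which is a short computation built on the locality identity \eqref{eq:gammatr}, and that $\ind_{\Gamma}(\widetilde{D})$ coincides with this common value, which is Atiyah's theorem itself. I will treat the first equality in detail and reduce the second to \cite[Theorem 3.8]{MR0420729}, indicating also how it can be made self-contained.

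First I would unwind the definition of the induced homomorphism. Since $e_{\widetilde{D}} - e_{1}$ lies in $M_{2}(\mathcal{L}^{1}_{\Gamma})$, the definition of $(\mathrm{tr}_{\Gamma})_{\ast}$ gives $(\mathrm{tr}_{\Gamma})_{\ast}(\mathrm{Ind}_{\Gamma}(\widetilde{D})) = (\mathrm{tr}_{\Gamma})_{\ast}([e_{\widetilde{D}}] - [e_{1}]) = \mathrm{tr}_{\Gamma}(e_{\widetilde{D}} - e_{1})$, where the $\Gamma$-trace of a $2 \times 2$ matrix is the sum of the $\Gamma$-traces of its diagonal entries. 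Reading those off from the matrix in the text gives $\mathrm{tr}_{\Gamma}(e_{\widetilde{D}} - e_{1}) = \mathrm{tr}_{\Gamma}(\widetilde{S_{0}}^{2}) - \mathrm{tr}_{\Gamma}(\widetilde{S_{1}}^{2})$. Now $\widetilde{S_{0}}^{2}$ and $\widetilde{S_{1}}^{2}$ are the lifts of $S_{0}^{2}$ and $S_{1}^{2}$, which are again almost local smoothing operators on $M$ (a product of almost local smoothing operators is almost local and smoothing), so \eqref{eq:gammatr} applies entrywise and yields $\mathrm{tr}_{\Gamma}(\widetilde{S_{0}}^{2}) - \mathrm{tr}_{\Gamma}(\widetilde{S_{1}}^{2}) = \mathrm{Tr}(S_{0}^{2}) - \mathrm{Tr}(S_{1}^{2})$. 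Finally, on the closed manifold $M$ the classical fact that $\mathrm{Tr}(S_{0}^{n}) - \mathrm{Tr}(S_{1}^{n}) = \ind(D)$ for every $n \geq 1$ (the Calder\'on--Fedosov formula; here $n = 2$), which itself comes from $D S_{0} = S_{1} D$ and the ordinary trace property, completes the first equality. Equivalently, the whole matrix $e_{\widetilde{D}} - e_{1}$ is entrywise the lift of the matrix $e_{D} - e_{1}$ assembled from $Q, S_{0}, S_{1}$ on $M$, so \eqref{eq:gammatr} identifies $\mathrm{tr}_{\Gamma}(e_{\widetilde{D}} - e_{1})$ with $\mathrm{Tr}(e_{D} - e_{1}) = \ind(D)$ at one stroke.

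For the second equality it remains to see $\mathrm{tr}_{\Gamma}(\Pi_{0}) - \mathrm{tr}_{\Gamma}(\Pi_{1}) = \ind(D)$, which is precisely \cite[Theorem 3.8]{MR0420729}. To reprove it in the present language, the natural route is: using the trace property of $\mathrm{tr}_{\Gamma}$ together with $\widetilde{D}\widetilde{S_{0}} = \widetilde{S_{1}}\widetilde{D}$ and $\widetilde{Q}\widetilde{S_{1}} = \widetilde{S_{0}}\widetilde{Q}$, a one-line manipulation shows that $\mathrm{tr}_{\Gamma}(\widetilde{S_{0}}^{n}) - \mathrm{tr}_{\Gamma}(\widetilde{S_{1}}^{n})$ is independent of $n \geq 1$; after a suitable choice of parametrix for which $\widetilde{S_{0}}, \widetilde{S_{1}}$ are positive, bounded by $1$, and intertwined, their even powers decrease strongly and monotonically to the harmonic projections, so by normality of the $\Gamma$-trace $\mathrm{tr}_{\Gamma}(\widetilde{S_{0}}^{2k}) - \mathrm{tr}_{\Gamma}(\widetilde{S_{1}}^{2k})$ converges to $\mathrm{tr}_{\Gamma}(\Pi_{0}) - \mathrm{tr}_{\Gamma}(\Pi_{1})$; alternatively one replaces the powers $\widetilde{S_{i}}^{n}$ by $e^{-t\widetilde{D}^{\ast}\widetilde{D}}$ and $e^{-t\widetilde{D}\widetilde{D}^{\ast}}$, uses $t$-independence of the difference of their $\Gamma$-traces, and compares the $t \to 0$ limit (equal, by locality of the heat asymptotics, to $\int_{M}$ of the classical expansion and hence to $\ind(D)$) with the $t \to \infty$ limit, $\mathrm{tr}_{\Gamma}(\Pi_{0}) - \mathrm{tr}_{\Gamma}(\Pi_{1})$.

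The only genuine obstacle is this last step. On the closed base the remainders of a suitable parametrix can be taken to be the harmonic projections outright, but on the covering $\widetilde{D}$ may fail to have a spectral gap at $0$, so the remainders are only lifts of operators on $M$ and one must really push the limit $n \to \infty$ (or $t \to \infty$) through the $\Gamma$-trace. This is exactly where the positivity and normality of $\mathrm{tr}_{\Gamma}$ as a trace on the $\Gamma$-von Neumann algebra enter, and it is the content of Atiyah's argument; everything else, in particular the matching with $(\mathrm{tr}_{\Gamma})_{\ast}(\mathrm{Ind}_{\Gamma}(\widetilde{D}))$, uses only \eqref{eq:gammatr} and the elementary algebraic properties of the $\Gamma$-trace.
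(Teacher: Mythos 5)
Your proposal is correct and follows essentially the same route as the paper: the equality $(\mathrm{tr}_{\Gamma})_{\ast}(\mathrm{Ind}_{\Gamma}(\widetilde{D}))=\mathrm{tr}_{\Gamma}(\widetilde{S_{0}}^{2})-\mathrm{tr}_{\Gamma}(\widetilde{S_{1}}^{2})$ by unwinding the induced trace on the diagonal entries, the identification with $\mathrm{Tr}(S_{0}^{2})-\mathrm{Tr}(S_{1}^{2})=\ind(D)$ via the locality formula \eqref{eq:gammatr} and the Calder\'on--Fedosov identity, and a reduction of $\ind_{\Gamma}(\widetilde{D})=\mathrm{tr}_{\Gamma}(\widetilde{S_{0}}^{2})-\mathrm{tr}_{\Gamma}(\widetilde{S_{1}}^{2})$ to Atiyah's argument (the paper likewise defers this step to \cite[Section 5]{MR0420729}, and your sketch correctly isolates the normality of $\mathrm{tr}_{\Gamma}$ as the analytic crux). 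No gaps.
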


As described in subsection \ref{sub:main}, 
Atiyah's proof of matching of these ingredients in the above equality
does not essentially use ellipticity. 
Note that 
ellipticity of $D$ and $\widetilde{D}$ is only used in 
the definition of these ingredients. 

\section{Atiya's $\Gamma$-index theorem in Heisenberg calculus}

Let $(M,H)$ be a closed Heisenberg manifold, 
that is, $M$ is a closed manifold and $H \subset TM$ is a hyperplane bundle. 
Let 
$P : C^{\infty}(E) \to C^{\infty}(F)$ 
be a Heisenberg elliptic differential operator on 
Hermitian vector bundles $E,F \to (M,H)$, 
that is, 
the Heisenberg principal symbol $\sigma_{H}(P)$ of $P$ 
is an invertible element. 
In this section, 
we prove the $\Gamma$-index theorem for $P$, 
which is analogous to 
Atiyah's $\Gamma$-index theorem. 
Note that $P$ is not an elliptic operator 
in the sense of ordinary pseudo-differential calculus. 

\subsection{Statement and proof}
\label{sub:main}

By \cite[Proposition 3.3.1]{MR2417549}, 
there exist parametrix $Q$ and 
smoothing operators $S_{0},S_{1}$ 
such that one has 
$QP = 1 - S_{0}$ and $PQ = 1 - S_{1}$. 
Thus  
$P$ is a Fredholm operator 
and one has the Fredholm index $\ind (P) \in \mathbb{Z}$ of $P$
by compactness of $M$. 
Moreover, since a integral kernel of $Q$ 
is smooth off the diagonal, 
we can choose $Q$ as 
an almost local operator 
and then $S_{0}$ and $S_{1}$ are 
also almost local operators. 

Let $\widetilde{M} \to M$ be a Galois covering 
with a deck transformation group $\Gamma$ 
over a closed manifold $M$ with a smooth measure $\mu$. 
We lift all structures on $M$ to $\widetilde{M}$. 
Then $(\widetilde{M} , \widetilde{H})$ 
is a Heisenberg manifold, 
$\widetilde{P} : C^{\infty}(\widetilde{E}) \to C^{\infty}(\widetilde{F})$ is a 
Heisenberg elliptic differential operator 
and one has $\widetilde{Q}\widetilde{P} = 1 - \widetilde{S}_{0}$ 
and $\widetilde{P}\widetilde{Q} = 1 - \widetilde{S}_{1}$. 

Since $P$ is a differential operator (in particular, $P$ is local), 
there exists a constant $C = C(\widetilde{P} , \phi ) > 0$ such that 
we have an inequality 
\[
\| \widetilde{P}(\phi f) \|_{L^{2}} 
	\leq C( \|\chi \widetilde{P}f \|_{L^{2}} + \| \chi f \|_{L^{2}} ) 
\]
for any $f \in C^{\infty}(\widetilde{E})$; see \cite[Proposition 3.3.2]{MR2417549}. 
Here, 
$\phi , \chi \in C_{c}^{\infty}(\widetilde{M})$ are compactly supported 
smooth functions and 
one assumes $\chi = 1$ on the support of $\phi$. 
Thus by using Atiyah's technique of the proof of \cite[Proposition 3.1]{MR0420729}, 
we have the following: 

\begin{lem}
\label{lem:closure}
The minimal domain of $\widetilde{P}$ equals the maximal domain of $\widetilde{P}$. 
\end{lem}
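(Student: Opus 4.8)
The plan is to carry over the argument from the proof of \cite[Proposition~3.1]{MR0420729}. The inclusion of the minimal domain in the maximal domain is automatic, the maximal domain being closed for the graph norm and containing $C^{\infty}_{c}(\widetilde{E})$, so the whole content is the reverse inclusion. Given $f$ in the maximal domain --- that is, $f\in L^{2}(\widetilde{E})$ with $\widetilde{P}f\in L^{2}(\widetilde{F})$ distributionally --- I would produce a sequence in $C^{\infty}_{c}(\widetilde{E})$ converging to $f$ in the graph norm, in two stages: first cut $f$ down to compact support, then smooth out the resulting compactly supported section. Throughout, write $m$ for the Heisenberg order of $\widetilde{P}$.

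For the first stage, lift a Riemannian metric from the closed base $M$; then $\widetilde{M}$ is complete and of bounded geometry, so, fixing a point $x_{0}\in\widetilde{M}$, one can choose $\phi_{R}\in C^{\infty}_{c}(\widetilde{M})$ with $0\le\phi_{R}\le1$, $\phi_{R}\equiv1$ on the ball $B(x_{0},R)$, $\phi_{R}\equiv0$ outside $B(x_{0},2R)$, and all derivatives of $\phi_{R}$ up to the ordinary order of $\widetilde{P}$ bounded uniformly in $R$ (take $\phi_{R}=\psi(\rho/R)$ for a fixed cutoff $\psi$ and a smoothed distance function $\rho$ from $x_{0}$). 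By maximal hypoellipticity of the Heisenberg-elliptic operator $\widetilde{P}$ --- of which \cite[Proposition~3.3.2]{MR2417549} is the basic a priori estimate, valid on $\widetilde{M}$ with constants that do not degenerate along the deck group $\Gamma$ since it is lifted from the compact $M$ --- the section $f$ lies locally in the Heisenberg--Sobolev space of order $m$. Consequently $[\widetilde{P},\phi_{R}]f$ --- where $[\widetilde{P},\phi_{R}]$ is a Heisenberg differential operator of order $<m$ whose coefficients are supported in the compact region $A_{R}$ on which $\phi_{R}$ is non-constant and are bounded uniformly in $R$ --- is a genuine $L^{2}$ section, and maximal hypoellipticity in its interior-estimate form bounds it in terms of $\|\widetilde{P}f\|_{L^{2}}$ and $\|f\|_{L^{2}}$ over a fixed enlargement of $A_{R}$; since $A_{R}$ escapes to infinity and $f,\widetilde{P}f\in L^{2}(\widetilde{M})$, this bound tends to $0$ as $R\to\infty$. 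Combining with $\|(\phi_{R}-1)\widetilde{P}f\|_{L^{2}}\to0$ (dominated convergence) and the identity $\widetilde{P}(\phi_{R}f)=\phi_{R}\widetilde{P}f+[\widetilde{P},\phi_{R}]f$, we obtain $\widetilde{P}(\phi_{R}f)\to\widetilde{P}f$ and $\phi_{R}f\to f$ in $L^{2}$, i.e.\ $\phi_{R}f\to f$ in the graph norm; and each $\phi_{R}f$ is compactly supported and again in the maximal domain.

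It remains to approximate a compactly supported section $g$ of the maximal domain. By the same maximal hypoellipticity $g$ lies locally --- hence, being compactly supported, globally --- in the Heisenberg--Sobolev space of order $m$; mollifying in finitely many charts by means of a partition of unity produces $g_{\varepsilon}\in C^{\infty}_{c}(\widetilde{E})$ with $g_{\varepsilon}\to g$ in that space, and, since $\widetilde{P}$ maps it continuously into $L^{2}$, also $\widetilde{P}g_{\varepsilon}\to\widetilde{P}g$ in $L^{2}$. Hence $g$ belongs to the minimal domain; a diagonal argument combining this with the previous stage places $f$ in the minimal domain, which proves the equality.

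The step I expect to be the crux is the passage to compact support: it requires the maximal-hypoellipticity estimate of \cite[Proposition~3.3.2]{MR2417549} to be available on $\widetilde{M}$ with constants uniform along $\Gamma$ --- so that the annular commutator term is controlled and made small --- which is precisely where cocompactness of $\widetilde{M}\to M$ is used (everything being lifted from the compact $M$, the region $A_{R}$ is covered by a bounded number of $\Gamma$-translates of a fixed compact set and the local estimates there assemble with a common constant; this also supplies the local Heisenberg--Sobolev regularity that renders $[\widetilde{P},\phi_{R}]f$ an $L^{2}$ section), together with cutoffs having $\Gamma$-uniformly bounded derivatives, for which completeness and bounded geometry of $\widetilde{M}$ suffice. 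Everything else --- the dominated-convergence terms and the mollification --- is routine. One may also avoid the explicit annular estimate by a soft argument as in \cite[Proposition~3.1]{MR0420729}: \cite[Proposition~3.3.2]{MR2417549} applied with $\phi=\phi_{R}$ already bounds $\{\phi_{R}f\}$ in the graph Hilbert space, so a subsequence converges weakly there, necessarily to $f$ since $\phi_{R}f\to f$ in $L^{2}$; as each $\phi_{R}f$ is a compactly supported section of the maximal domain, hence --- by the mollification step --- of the minimal domain, and the latter is a closed, hence weakly closed, subspace of the graph Hilbert space, the limit $f$ lies in it.
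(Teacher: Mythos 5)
Your proposal is correct and takes essentially the same route the paper intends: the paper establishes the lemma only by quoting the cutoff estimate $\| \widetilde{P}(\phi f) \|_{L^{2}} \leq C( \|\chi \widetilde{P}f \|_{L^{2}} + \| \chi f \|_{L^{2}} )$ from \cite[Proposition 3.3.2]{MR2417549} and invoking Atiyah's technique from \cite[Proposition 3.1]{MR0420729}, which is exactly your closing soft argument (uniform graph-norm bounds for $\phi_{R}f$ via $\Gamma$-invariance of the lifted data, weak compactness, mollification of compactly supported elements of the maximal domain, and weak closedness of the minimal domain). Your more quantitative first route through the commutator $[\widetilde{P},\phi_{R}]$ is a valid strengthening but is not needed for the paper's argument.
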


By Lemma \ref{lem:closure}, 
$\widetilde{P}$ has the unique 
closed extension denoted by the same letter $\widetilde{P}$ 
and thus the closure of the formal adjoint of $\widetilde{P}$ 
(the formal adjoint is also Heisenberg elliptic) 
equals the Hilbert space adjoint $\widetilde{P}^{\ast}$. 

On the other hand, 
any $L^{2}$-solutions of $\widetilde{P}u = 0$ and $\widetilde{P}^{\ast}u = 0$ 
are smooth by 
existence of a parametrix. 
Thus the orthogonal projection $\Pi_{0}$ (resp. $\Pi_{1}$) 
onto a closed subspace 
$\mathrm{Ker}_{L^{2}}(\widetilde{P})$ (resp. $\mathrm{Ker}_{L^{2}}(\widetilde{P}^{\ast})$) 
of the $L^{2}$-sections 
is of $\Gamma$-trace class 
since operators $\phi \Pi_{0} \psi$ and $\phi \Pi_{1} \psi$ 
are smoothing operators on compact sets. 
Thus one obtains the well-defined $\Gamma$-index of $\widetilde{P}$. 

\begin{dfn}
The $\Gamma$-index of $\widetilde{P}$ is defined to be 
\[
\ind_{\Gamma}(\widetilde{P}) 
	= \mathrm{tr}_{\Gamma}(\Pi_{0}) - \mathrm{tr}_{\Gamma}(\Pi_{0}) \in \mathbb{R}. 
\]
\end{dfn}

By using operators $\widetilde{P}, \widetilde{Q}, \widetilde{S}_{0}$ 
and $\widetilde{S}_{1}$, we define  
\[ 
e_{\widetilde{P}} = 
\begin{bmatrix}
\widetilde{S_{0}}^{2} & 
	\widetilde{S_{0}}(1 + \widetilde{S_{0}}) \widetilde{Q} \\
\widetilde{S_{1}}\widetilde{P} & 1-\widetilde{S_{1}}^{2}
\end{bmatrix} \quad \text{and} \quad 
e_{1} = 
\begin{bmatrix}
0 & 0 \\ 
0 & 1
\end{bmatrix}. 
\]
Since a difference $e_{\widetilde{P}} - e_{1}$ is of $\Gamma$-trace class,  
one can define a $\Gamma$-index class of $\widetilde{P}$.  

\begin{dfn}
We define $\Gamma$-index class $\mathrm{Ind}_{\Gamma}(\widetilde{P})$ of $\widetilde{P}$ by 
\[
\mathrm{Ind}_{\Gamma}(\widetilde{P}) 
	= [e_{\widetilde{P}}] - [e_{1}] \in K_{0}(\mathcal{K}_{\Gamma}). 
\]
\end{dfn}

By using a $\Gamma$-trace, 
we have the $\Gamma$-index theorem 
in Heisenberg calculus. 

\begin{thm}
\label{thm:GammaHeisen}
Let $P$ be a Heisenberg elliptic differential operator 
on a closed Heisenberg manifold $(M,H)$ 
and $\widetilde{P}$ its lift as previously. 
Then one has 
\[
\ind_{\Gamma}(\widetilde{P}) 
	= (\mathrm{tr}_{\Gamma})_{\ast}(\mathrm{Ind}_{\Gamma}(\widetilde{P}))
	= \ind (P) \in \mathbb{Z}. 
\]
\end{thm}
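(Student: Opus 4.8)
The plan is to prove the two outer equalities $(\mathrm{tr}_{\Gamma})_{\ast}(\mathrm{Ind}_{\Gamma}(\widetilde{P})) = \ind(P)$ and $\ind_{\Gamma}(\widetilde{P}) = \ind(P)$; the middle equality then follows. The first is algebraic and rests on the $\Gamma$-trace formula $(\ast)$ together with its counterpart on $M$. The second is Atiyah's heat-equation proof of \cite[Theorem~3.8]{MR0420729} carried over to the present situation, the point being that ellipticity of $D$ enters that proof only in order to make the heat operators of $D^{\ast}D$ and $DD^{\ast}$ smoothing, and here that property comes instead from the Heisenberg calculus applied to $P^{\ast}P$ and $PP^{\ast}$. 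The only genuinely new analytic input is that the almost local Heisenberg parametrix $\widetilde{Q}$ is bounded on $L^{2}$ (it has negative Heisenberg order), so that every product of $\widetilde{P}, \widetilde{Q}, \widetilde{S}_{0}, \widetilde{S}_{1}$ occurring below is an almost local smoothing operator in the Heisenberg calculus whenever it is not a differential operator; see \cite[Section~3.3]{MR2417549}.

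For the first equality, since $e_{\widetilde{P}} - e_{1}$ is of $\Gamma$-trace class, the definition of $(\mathrm{tr}_{\Gamma})_{\ast}$ and the computation of the trace of the matrix $e_{\widetilde{P}} - e_{1}$ give
\[
(\mathrm{tr}_{\Gamma})_{\ast}(\mathrm{Ind}_{\Gamma}(\widetilde{P})) = \mathrm{tr}_{\Gamma}(e_{\widetilde{P}} - e_{1}) = \mathrm{tr}_{\Gamma}(\widetilde{S}_{0}^{2}) - \mathrm{tr}_{\Gamma}(\widetilde{S}_{1}^{2}).
\]
The operator $\widetilde{S}_{0}^{2} - \widetilde{S}_{1}^{2}$ is the lift of the almost local smoothing operator $S_{0}^{2} - S_{1}^{2}$ on $M$, so $(\ast)$ gives $\mathrm{tr}_{\Gamma}(\widetilde{S}_{0}^{2}) - \mathrm{tr}_{\Gamma}(\widetilde{S}_{1}^{2}) = \mathrm{Tr}(S_{0}^{2}) - \mathrm{Tr}(S_{1}^{2}) = \mathrm{Tr}(e_{P} - e_{1})$, where $e_{P}$ is the Connes--Skandalis idempotent built on $M$ from $P, Q, S_{0}, S_{1}$. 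Since $P$ is Fredholm on the closed manifold $M$ with parametrix $Q$, the class $[e_{P}] - [e_{1}]$ is the $K$-theoretic index class of $P$ and $\mathrm{Tr}(e_{P} - e_{1}) = \ind(P)$; see \cite[II.9.$\alpha$]{MR1303779}. (Equivalently: $S_{0}$ and $S_{1}$ are smoothing, hence trace class, on a closed manifold, and $\mathrm{Tr}(S_{0}^{2}) - \mathrm{Tr}(S_{1}^{2}) = \ind(P)$ is the Fedosov--Calder\'on formula.) Hence $(\mathrm{tr}_{\Gamma})_{\ast}(\mathrm{Ind}_{\Gamma}(\widetilde{P})) = \ind(P) \in \mathbb{Z}$.

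For the second equality I would run Atiyah's heat-equation argument. By Lemma~\ref{lem:closure}, applied to $\widetilde{P}$ and to its formal adjoint, the operators $\widetilde{P}^{\ast}\widetilde{P}$ and $\widetilde{P}\widetilde{P}^{\ast}$ are non-negative self-adjoint, and their underlying differential operators are the lifts of $P^{\ast}P$ and $PP^{\ast}$. Now $P^{\ast}P$ and $PP^{\ast}$ are Heisenberg elliptic (their Heisenberg principal symbols are invertible), formally self-adjoint and non-negative, hence admit parametrices in the Heisenberg calculus of \cite[Section~3.3]{MR2417549}, so that $e^{-tP^{\ast}P}$ and $e^{-tPP^{\ast}}$ are smoothing and almost local; their lifts are $e^{-t\widetilde{P}^{\ast}\widetilde{P}}$ and $e^{-t\widetilde{P}\widetilde{P}^{\ast}}$. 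Therefore, by $(\ast)$ and the McKean--Singer argument on $M$ (using that $P^{\ast}P$ and $PP^{\ast}$ have compact resolvents and are intertwined by $P$),
\[
\mathrm{tr}_{\Gamma}(e^{-t\widetilde{P}^{\ast}\widetilde{P}}) - \mathrm{tr}_{\Gamma}(e^{-t\widetilde{P}\widetilde{P}^{\ast}}) = \mathrm{Tr}(e^{-tP^{\ast}P}) - \mathrm{Tr}(e^{-tPP^{\ast}}) = \ind(P)
\]
for every $t > 0$. Letting $t \to \infty$: since $\mathrm{tr}_{\Gamma}$ is a normal trace and $e^{-t\widetilde{P}^{\ast}\widetilde{P}} \to \Pi_{0}$, $e^{-t\widetilde{P}\widetilde{P}^{\ast}} \to \Pi_{1}$ strongly, monotone convergence against the $\Gamma$-spectral density function gives $\mathrm{tr}_{\Gamma}(e^{-t\widetilde{P}^{\ast}\widetilde{P}}) \to \mathrm{tr}_{\Gamma}(\Pi_{0})$ and $\mathrm{tr}_{\Gamma}(e^{-t\widetilde{P}\widetilde{P}^{\ast}}) \to \mathrm{tr}_{\Gamma}(\Pi_{1})$, exactly as in \cite{MR0420729}. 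Hence $\ind_{\Gamma}(\widetilde{P}) = \mathrm{tr}_{\Gamma}(\Pi_{0}) - \mathrm{tr}_{\Gamma}(\Pi_{1}) = \ind(P)$, and together with the previous paragraph all three quantities agree.

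The step that really has to be checked, rather than quoted from \cite{MR0420729}, is that $e^{-t\widetilde{P}^{\ast}\widetilde{P}}$ and $e^{-t\widetilde{P}\widetilde{P}^{\ast}}$ are $\Gamma$-trace class lifts of smoothing operators to which $(\ast)$ applies: concretely, that Heisenberg ellipticity of $P^{\ast}P$ makes $(P^{\ast}P + 1)^{-N}$ a Heisenberg pseudodifferential operator of Heisenberg order $-2N$, so that $e^{-tP^{\ast}P} = (P^{\ast}P + 1)^{-N} \cdot (P^{\ast}P + 1)^{N} e^{-tP^{\ast}P}$ maps $L^{2}(M)$ into arbitrarily high Heisenberg--Sobolev spaces and its kernel is smooth and concentrated near the diagonal (and likewise on $\widetilde{M}$). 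Granting this, the remaining ingredients --- normality of $\mathrm{tr}_{\Gamma}$, the spectral theorem, and McKean--Singer on $M$ --- are formal and make no use of ellipticity, so Atiyah's argument applies without change. I note that one cannot bypass this by letting $k \to \infty$ in $\mathrm{tr}_{\Gamma}(\widetilde{S}_{0}^{k}) - \mathrm{tr}_{\Gamma}(\widetilde{S}_{1}^{k})$: this difference is independent of $k \geq 1$ (by the trace property of $\mathrm{tr}_{\Gamma}$ together with $\widetilde{S}_{1}\widetilde{P} = \widetilde{P}\widetilde{S}_{0}$) and already equals $\ind(P)$ by $(\ast)$ and the Fedosov--Calder\'on formula, but $\widetilde{S}_{0}^{k}$ need not converge to $\Pi_{0}$, so the heat operators (or the resolvents) are genuinely needed to reach $\mathrm{tr}_{\Gamma}(\Pi_{0}) - \mathrm{tr}_{\Gamma}(\Pi_{1})$.
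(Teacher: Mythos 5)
Your first half (the computation of $(\mathrm{tr}_{\Gamma})_{\ast}(\mathrm{Ind}_{\Gamma}(\widetilde{P}))$ via the matrix trace of $e_{\widetilde{P}}-e_{1}$, followed by $(\ast)$ applied to the almost local smoothing operators $S_{0}^{2},S_{1}^{2}$ and the Fedosov--Calder\'on formula on the closed base) is exactly what the paper does. The second half, however, has a genuine gap. You claim, for every fixed $t>0$, that
\[
\mathrm{tr}_{\Gamma}\bigl(e^{-t\widetilde{P}^{\ast}\widetilde{P}}\bigr)-\mathrm{tr}_{\Gamma}\bigl(e^{-t\widetilde{P}\widetilde{P}^{\ast}}\bigr)
=\mathrm{Tr}\bigl(e^{-tP^{\ast}P}\bigr)-\mathrm{Tr}\bigl(e^{-tPP^{\ast}}\bigr)
\]
``by $(\ast)$''. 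But $(\ast)$ is stated (and is only true) for lifts of \emph{almost local} smoothing operators, i.e.\ operators whose kernels are supported near the diagonal; a heat operator is never almost local, and $e^{-t\widetilde{P}^{\ast}\widetilde{P}}$ is not the lift of $e^{-tP^{\ast}P}$ in that sense (the kernels are related by $\Gamma$-periodization, not by local identification). Indeed each individual equality $\mathrm{tr}_{\Gamma}(e^{-t\widetilde{\Delta}})=\mathrm{Tr}(e^{-t\Delta})$ is already false for $M=S^{1}$, $\widetilde{M}=\mathbb{R}$: the left side is $2\pi/\sqrt{4\pi t}$, the right side $\sum_{n}e^{-tn^{2}}$. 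Since the left-hand difference is $t$-independent and equals $\ind_{\Gamma}(\widetilde{P})$, while the right-hand difference equals $\ind(P)$ by McKean--Singer, your displayed fixed-$t$ identity is \emph{equivalent} to the theorem you are proving; as written the step is circular. To repair it along heat-kernel lines you would need the standard two-step argument: $t$-independence of both supertraces, plus a $t\to 0^{+}$ comparison showing the off-diagonal ($\gamma\neq e$) contributions vanish in the limit, which for $P^{\ast}P$ requires Gaussian-type off-diagonal bounds for hypoelliptic Heisenberg heat kernels on the noncompact cover --- analysis that you neither quote nor supply (and note that Atiyah's original proof of Theorem 3.8 is not a heat-equation proof).

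The paper avoids all of this: it observes $1-S_{0}^{2}=(2Q-QPQ)P$ and $1-S_{1}^{2}=P(2Q-QPQ)$, with $2Q-QPQ$, $S_{0}^{2}$, $S_{1}^{2}$ almost local, and invokes Atiyah's parametrix technique (\cite[Section 5]{MR0420729}, a von Neumann--algebraic Calder\'on-type argument) to conclude directly that $\ind_{\Gamma}(\widetilde{P})=\mathrm{tr}_{\Gamma}(\widetilde{S_{0}}^{2})-\mathrm{tr}_{\Gamma}(\widetilde{S_{1}}^{2})$. Then $(\ast)$ is applied only to the genuinely almost local smoothing operators $S_{0}^{2}$ and $S_{1}^{2}$, which is legitimate, and all three quantities are identified with the same number $\mathrm{tr}_{\Gamma}(\widetilde{S_{0}}^{2})-\mathrm{tr}_{\Gamma}(\widetilde{S_{1}}^{2})$. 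If you want to keep a heat-kernel route you must do the small-time analysis; otherwise you should replace your second half by this parametrix argument, which uses only ingredients you already have.
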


\begin{proof}
First, note that equalities
\begin{align*}
1 - S_{0}^{2} &= 1 - (1 - QP)^{2} = (2Q - QPQ)P \quad \text{and} \\
1 - S_{1}^{2} &= 1 - (1 - PQ)^{2} = P(2Q - QPQ), 
\end{align*}
and note that operators $2Q - QPQ$ , $S_{0}^{2}$ and $S_{1}^{2}$ 
are almost local operators. 
Thus 
by Atiyah's technique in \cite[Section 5]{MR0420729}, 
one has 
\[
\ind_{\Gamma}(\widetilde{P}) 
	= \mathrm{tr}_{\Gamma}(\Pi_{0}) - \mathrm{tr}_{\Gamma}(\Pi_{1}) 
	= \mathrm{tr}_{\Gamma}(\widetilde{S_{0}}^{2}) - \mathrm{tr}_{\Gamma}(\widetilde{S_{1}}^{2}). 
\]

Next, the definition of a map $(\mathrm{tr}_{\Gamma})_{\ast}$, 
one has 
\[
(\mathrm{tr}_{\Gamma})_{\ast}(\mathrm{Ind}_{\Gamma}(\widetilde{P}))
	= \mathrm{tr}_{\Gamma}
		\begin{bmatrix}
				\widetilde{S_{0}}^{2} & 
				\widetilde{S_{0}}(1 + \widetilde{S_{0}}) \widetilde{Q} \\
				\widetilde{S_{1}}\widetilde{P} & -\widetilde{S_{1}}^{2}
				\end{bmatrix}
	= \mathrm{tr}_{\Gamma}(\widetilde{S_{0}}^{2}) - \mathrm{tr}_{\Gamma}(\widetilde{S_{1}}^{2}). 
\]

Since operators $\widetilde{S}_{0}^{2}$ and $\widetilde{S}_{1}^{2}$ are 
lifts of almost local smoothing operators, 
one has 
\[
\ind (P)
	= \mathrm{Tr}(S_{0}^{2}) - \mathrm{Tr}(S_{1}^{2}) 
	= \mathrm{tr}_{\Gamma}(\widetilde{S_{0}}^{2}) - \mathrm{tr}_{\Gamma}(\widetilde{S_{1}}^{2}) 
\]
by using (\ref{eq:gammatr}) in Section \ref{sec:atiyah}. 
This proves the equality in the theorem. 
\end{proof}

\begin{rmk}
As pointed out in \cite[Section 4]{Erppolycontant}, 
the results in \cite[Section 3]{MR2417549} hold 
verbatim for arbitrary codimension. 
That is, 
we do not need to assume that 
a distribution $H$ is of codimension $1$. 
\end{rmk}

\subsection{Example}
\label{sub:exm}

Index problems for Heisenberg elliptic operators 
on arbitrary closed Heisenberg manifolds 
are not solved yet. 
However, van Erp \cite{MR2680395,MR2680396} and 
Baum and van Erp \cite{MR3261009} solved 
the index problem on contact manifolds, 
which are 
good examples of Heisenberg manifolds. 
In this subsection, 
we investigate an example of Heisenberg elliptic differential operators 
with non-trivial $\Gamma$-index 
on a Galois covering over a closed contact manifold
by using the index formula for subLaplacians.  

Let $T^{2} = S^{1} \times S^{1} = \{ (e^{ix},e^{iy}) \}$ be a $2$-torus 
and set 
\[
e(x,y) = 
\begin{bmatrix}
f(x) & g(x) + h(x)e^{iy} \\
g(x) + h(x)e^{-iy} & 1-f(x)
\end{bmatrix}. 
\]
Here, let $f$ be a $[0,1]$-valued 
$2\pi \mathbb{Z}$-periodic function on $\mathbb{R}$ 
such that $f(0) = 1$ and $f(\pi) = 0$ 
and 
set $g(x) = \chi_{[0, \pi]}(x)\sqrt{f(x)-f(x)^{2}}$ and 
$h(x) = \chi_{[\pi, 2\pi]}(x)\sqrt{f(x)-f(x)^{2}}$; see \cite[Section I. 2]{Loringthesis}. 
Moreover, we assume $f$, $g$ and $h$ are smooth functions. 
Then $e$ defines an $M_{2}(\mathbb{C})$-valued 
smooth function on $T^{2}$. 

Since $e$ is an idempotent of rank $1$, 
$e$ defines an complex line bundle $E$ on $T^{2}$. 
As well known, 
the first Chern class $c_{1}(E)$ of $E$ is given by 
a $2$-form 
\[
\frac{-1}{2\pi i}\mathrm{tr}(e(de)^{2}) 
	= \frac{-1}{\pi}(hh' + 2f'h^{2} -2fhh')dx \wedge dy. 
\]
Thus, by using an equality $h^{2} = f-f^{2}$ on $[\pi , 2\pi]$, 
we can calculate the first Chern number of $E$: 
\[
\int_{T^{2}}c_{1}(E) = -\int_{\pi}^{2\pi}f'dx = -1. 
\]

Let $T^{3} = T^{2} \times S^{1} = \{ (e^{ix}, e^{iy} , e^{iz}) \}$ be a $3$-torus and 
$q : T^{3} \to T^{2}$ the projection onto $T^{2}$ of the first component. 
Set $\theta_{k} = \cos (kz)dx - \sin (kz)dy$ for a positive integer $k$, 
$H_{k} = \mathrm{ker}(\theta_{k})$, 
$f_{l}(x,y,z) = e^{ilz} + 1$ for a integer $l$ 
and $F = q^{\ast}E$. 
Then $(T^{3},H_{k})$ is a contact manifold 
and $H_{k}$ is a flat vector bundle. 
Denote by $T_{k}$ the Reeb vector field for $\theta_{k}$ and 
$\Delta_{H_{k}}^{F} 
	= -\nabla_{X_{k}}^{F}\nabla_{X_{k}}^{F} 
			- \nabla_{Y_{k}}^{F}\nabla_{Y_{k}}^{F}$ 
the sum of squares on $F$, 
where $\{ X_{k} , Y_{k} \}$ is a local frame of $H_{k}$.  
Set 
\[
P_{k,l} = \Delta_{H_{k}}^{F} + if_{l}\nabla_{T_{k}}^{F}. 
\]
Since the values of $f_{l} - n$ contained in $\mathbb{C}^{\times}$ for any odd integer $n$, 
an operator $P_{k,l} : C^{\infty}(F) \to C^{\infty}(F)$ 
is a Heisenberg elliptic differential operator of Heisenberg order $2$. 
By the index formula for $P_{k,l}$ in \cite[Example 6.5.3]{MR3261009}, one has 
\[
\ind (P_{k,l}) = \int_{T^{3}}\frac{-1}{2\pi i}e^{-ilz}de^{ilz} \wedge c_{1}(F) 
	= \frac{-1}{2\pi i}\int_{S^{1}}e^{-ilz}de^{ilz} \int_{T^{2}}c_{1}(E) 
	= l. 
\]

Note that 
a contact structure $H_{k}$ is a lift of $H_{1}$ by  
a $k$-fold cover $p_{k} : T^{3} \to T^{3}$; 
$(e^{ix},e^{iy},e^{iz}) \mapsto (e^{ix},e^{iy},e^{ikz})$. 
Since the lift $\widetilde{P_{1,l}}$ of a subLaplacian $P_{1,l}$ 
by $p_{k}$ equals $P_{k,kl}$, 
we have the $\Gamma (= \mathbb{Z}/k\mathbb{Z})$-index of $\widetilde{P_{1,l}}$: 
\[
\ind_{\Gamma}(\widetilde{P_{1,l}}) = \frac{1}{k}\ind (\widetilde{P_{1,l}})
	= \frac{1}{k}\ind (P_{k,kl}) = l = \ind (P_{1,l}). 
\]

Next, we consider a general Galois covering of $T^{3}$. 
Let $X \to T^{3}$ be a Galois covering 
with a deck transformation group $\Gamma$, 
which is a quotient of $\pi_{1}(T^{3}) = \mathbb{Z}^{3}$, 
for example, $X = \mathbb{R}^{3}$ and $\Gamma = \mathbb{Z}^{3}$ 
the universal covering. 
By Theorem \ref{thm:GammaHeisen}, 
we have a non-trivial $\Gamma$-index as follows: 
\[ 
\ind_{\Gamma} (\widetilde{P_{k,l}}) = \ind (P_{k,l}) = l. 
\]

\bibliographystyle{plain}
\bibliography{Gamma_Heisen_ref}

\begin{thebibliography}{1}

\bibitem{MR0420729}
M.~F. Atiyah.
\newblock Elliptic operators, discrete groups and von {N}eumann algebras.
\newblock In {\em Colloque ``{A}nalyse et {T}opologie'' en l'{H}onneur de
  {H}enri {C}artan ({O}rsay, 1974)}, pages 43--72. Ast\'erisque, No. 32--33.
  Soc. Math. France, Paris, 1976.

\bibitem{MR3261009}
P.~F. Baum and E.~van Erp.
\newblock {$K$}-homology and index theory on contact manifolds.
\newblock {\em Acta Math.}, 213(1):1--48, 2014.

\bibitem{MR1303779}
A.~Connes.
\newblock {\em Noncommutative geometry}.
\newblock Academic Press Inc., San Diego, CA, 1994.

\bibitem{Loringthesis}
T.~A. Loring.
\newblock {\em The Torus and Noncommutative Topology}.
\newblock PhD thesis, University of California, 1986.

\bibitem{MR2417549}
R.~S. Ponge.
\newblock Heisenberg calculus and spectral theory of hypoelliptic operators on
  {H}eisenberg manifolds.
\newblock {\em Mem. Amer. Math. Soc.}, 194(906):viii+ 134, 2008.

\bibitem{MR2680395}
E.~van Erp.
\newblock The {A}tiyah-{S}inger index formula for subelliptic operators on
  contact manifolds. {P}art {I}.
\newblock {\em Ann. of Math. (2)}, 171(3):1647--1681, 2010.

\bibitem{MR2680396}
E.~van Erp.
\newblock The {A}tiyah-{S}inger index formula for subelliptic operators on
  contact manifolds. {P}art {II}.
\newblock {\em Ann. of Math. (2)}, 171(3):1683--1706, 2010.

\bibitem{Erppolycontant}
E.~van Erp.
\newblock Contact structures of arbitrary codimension and idempotents in the
  heisenberg algebra, 2010.
\newblock arXiv:1001.5426.

\bibitem{MR2746652}
Erik van Erp.
\newblock The index of hypoelliptic operators on foliated manifolds.
\newblock {\em J. Noncommut. Geom.}, 5(1):107--124, 2011.

\end{thebibliography}

\end{document}